\newcommand{\frs}{\mathfrak s}
\newcommand{\frt}{\mathfrak t}
\newcommand{\cQ}{{\mathcal Q}}
\newcommand{\cR}{{\mathcal R}}
\newcommand{\cS}{{\mathcal S}}
\newcommand{\cT}{{\mathcal T}}
\newcommand{\cZ}{{\mathcal Z}}
\newcommand{\N}{\mathbb{N}}
\newcommand{\R}{\mathbb{R}}
\newcommand{\Z}{\mathbb{Z}}
\newcommand{\Graph}{\mathsf{Graph}}
\newcommand{\RRel}{\mathsf{RR}}
\newcommand{\RR}{\mathsf{RR}}
\newcommand{\HS}{\mathsf{HyPh}}
\newcommand{\DS}{\mathsf{DS}}
\newcommand{\HDS}{\mathsf{HDS}}
\DeclareMathAlphabet{\mathpzc}{OT1}{pzc}{m}{it}
\newcommand{\toto}{\rightrightarrows}
\newtheorem{thm}{Theorem}[section]
\newtheorem{theorem}[thm]{Theorem}
\newtheorem{proposition}[thm]{Proposition}
\newtheorem*{corollary*}{Corollary}
\theoremstyle{definition}
\newtheorem{definition}[thm]{Definition}
\newtheorem{remark}[thm]{Remark}
\newtheorem{example}[thm]{Example}
\newtheorem{notation}[thm]{Notation}
\numberwithin{equation}{section}
\begin{document}
\title{A category  of hybrid systems.}  
\author{Eugene Lerman }

\begin{abstract}
  We propose a definition of the category of hybrid systems in which
  executions are special types of morphisms. 
  Consequently
  morphisms of hybrid systems send executions to executions.

We plan to use this result to define and study networks of hybrid systems.

\end{abstract}
\maketitle
\tableofcontents

\section{Introduction}

In this paper propose a definition of a category of 
non-deterministic hybrid systems.  Hybrid systems are dynamical systems
that exhibit both continuous time evolution, which we model by vector
fields on manifolds with corners, and abrupt transitions (``discrete transitions'' or ``jumps'').

Our basic philosophy is that of category theory --- so rather than
study dynamical systems one at a time we aim to study maps between all
relevant dynamical systems at once.  To quote
Silverman~\cite{Silverman.book}: 
\begin{quote}
``A meta-mathematical principle is
that one first studies (isomorphism classes of) objects, then one
studies the maps between objects that preserve the objects'
properties, then the maps themselves become objects for study...''
\end{quote}

Definitions of a hybrid dynamical systems varies widely in literature
depending on the background and needs of the practitioners.  They all
include directed graphs, phase spaces attached to the nodes of the
graph and partial maps or relations attached to arrows of the graph.

To get started we choose one definition of a directed graph.
We recall a fairly standard definition of a hybrid dynamical system
and its executions.  We introduce the notion of a hybrid phase space
so that we can think of a hybrid dynamical system as a hybrid phase
space with a ``hybrid vector field.''  We propose a notion of a map
between two hybrid systems. This turns hybrid systems into a
category. We justify our definition by proving that maps of hybrid
systems send executions to executions.  We also explain why executions
can be thought of as morphisms of hybrid dynamical systems.  We are
aware of one previous attempt to bring category theoretic methods to
hybrid dynamical systems by Ames \cite{AmesThesis} Ames and Sastry
\cite{AmesSastry}.  Our construction is simpler and covers a larger
class of systems.  Readers who like category theory may be entertained
by the appearance of pseudo-double categories.

We plan to use our approach to hybrid systems to define and study networks of hybrid systems. In particular our goal is to prove for hybrid systems analogues of results in \cite{DeVille.Lerman2},   \cite{DeVille.Lerman}, \cite{VSL}, \cite{LS}.\\

\noindent {\bf Acknowledgments:} I thank Sayan Mitra for many hours of
conversations.  In a better world we would have written this paper together.

\section{Background}

In this section we review the definitions of directed graphs,
manifolds with corners (which we call ``regions'') and smooth maps
between regions, set-theoretic relations and a traditions definition
of a hybrid dynamical system essentially following  \cite{SJLS}.
\subsection*{Graphs}
We start by fixing a notion of a graph and of a map of graphs.  

\begin{definition} \label{def:graph} A {\sf directed multigraph} $A$
  is a pair of collections $A_0$ (nodes, vertices) and $A_1$ (arrows,
  edges) together with two maps $\frs,\frt:A_1\to A_0$ (source and
  target).  We do not require that $A_1, A_0$ are sets in the sense of ZFC.

We depict an arrow $\gamma \in A_1$ with the source $a$ and target $b$
as $a\xrightarrow{\gamma}b$.  We write $A=\{A_1\toto A_0\}$ to
remind ourselves that our graph $A$ consists of two collections  and two maps.

A graph $A=\{A_1\toto A_0\}$ is {\sf finite} if the collections $A_1$ and
$A_0$ are finite.
\end{definition}

\begin{remark}
  Every category has an underlying graph: forget the composition of
  morphisms.  Since the collections of objects and morphisms in a
  given category may be too big to be sets, we choose to define
  graphs in such a way as to induce the underlying graphs of
  categories that are not necessarily small.  This causes no problems.
\end{remark}
Next we record our definition of a map of graphs:
\begin{definition}\label{def:map_of_graphs}
  A {\sf map of graphs} $\varphi:A\to B$ from a graph $A$ to a graph
  $B$ is a pair of maps $\varphi_1:A_1\to B_1$, $\varphi_0:A_0\to B_0$
  taking edges of $A$ to edges of $B$, nodes of $A$ to nodes of $B$ so
  that for any edge $\gamma$ of $A$ we have
\[
\varphi_0 (\frs(\gamma)) = \frs(\varphi_1 (\gamma))\quad
\textrm{and} \quad \varphi_0 (\frt(\gamma)) = \frt(\varphi_1 (\gamma)).
\]
We will usually omit the indices 0 and 1 and write $\varphi(\gamma)$
for $\varphi_1 (\gamma)$ and $\varphi(a)$ for $\varphi_0 (a)$.
\end{definition}

Note the maps of graphs can be composed and that the composition is associative.
Hence graphs form a category.  We now formally record its definition.

\begin{definition}
  Directed multigraphs form a category $\Graph$. Its objects are directed
  graphs (see Definition~\ref{def:graph}).  Morphisms are maps of
  graphs (see Definition~\ref{def:map_of_graphs}).
\end{definition}

\subsection*{Regions and continuous time dynamics}
Continuous time dynamics takes place in {\em regions}.  For most
purposes of this paper one may take a region to be an open subset of
some coordinate space $\R^n$.  However, we also like to consider
closed intervals $[a,b]\subset \R$ as regions.  On the other hand, we
do not want to consider {\em arbitrary} subsets of $\R^n$ as regions
--- those are too wild and we will not be able to make sense of
differential equations on such sets.  A subset $D$ of $\R^n$ (for
some $n$) with smooth boundary should definitely be considered a
region.  For many purposes it is not very wrong to think of regions
this way.  However, later on we will need to take products of regions.
The product of two regions with smooth boundary no longer has a smooth
boundary.  For example the product of two unit intervals is the unit square:
\[
[0,1]\times[0,1]= \{ (x,y) \in \R^2 \mid 0\leq x \leq 1, 0\leq y \leq 1\}.
\]
The boundary of the unit square is only piecewise smooth.  This forces
us to define a region to be a subset of $\R^n$ with smooth corners.
(Recall that a corner of an $n$-dimensional region is smooth if it is
locally diffeomorphic to the standard orthant $[0,\infty)^n$.)  It is
convenient for various purposes to treat regions as abstract manifolds
with corners.  However, a reader will not be too far wrong simply to
think of a region as an open subset of some $\R^n$'s with a
piecewise-smooth boundary. There are a number of textbooks and survey
articles that deal with manifolds with corners.  We recommend Joyce
\cite{Joyce} and Michor \cite{Michor}. Our notion of a map of
manifolds with corners follows \cite{Michor} and is much weaker than
the one in \cite{Joyce}.  Namely we only require that a smooth map
between manifolds with corners pulls back smooth functions to a smooth
functions.\footnote{That is, for the purpose of defining smooth maps
  we think of manifolds with corners as diffeological spaces \cite{Igl}.}
In particular we allow corners to be mapped into the interior.


\begin{definition}

  A {\sf vector field $X$ on a manifold with corners $D$} is a section
  of its tangent bundle $TD\to D$.  We write $X\in \Gamma (TD)$.  A
  {\sf integral curve} of the vector field $X$ is a smooth map $x:I\to
  D$ of $X$ (where $I$ is an interval) so that $\frac{d}{dt}x =
  X(x(t))$ for all $t\in I$.
\end{definition}

\begin{definition}
  A {\sf continuous time dynamical system} is a pair $(D,X)$ where $D$
  is a manifold with corners (i.e., a region) and $X$ is a vector field on $D$.
\end{definition}

Continuous time dynamical systems form a category. 

Namely we define a map from a (continuous time) dynamical
system $(D_1, X_1)$ to a dynamical system $(D_2, X_2)$ to be a map
$f:D_1\to D_2$ of manifolds with corners with $Tf \circ X_1 =X_2 \circ
f$ (here and elsewhere in the paper $Tf$ denotes the differential of $f$).

\begin{remark}
  It is easy to see that if $f:(D_1, X_1) \to (D_2, X_2)$ and $g:(D_2,
  X_2) \to (X_3, D_3)$ are two maps of continuous time dynamical
  systems then so is their composite $g\circ f$.  Hence continuous
  time dynamical systems do form a category.  We denote it by $\DS$.
\end{remark}
\begin{definition}[the category of  $\DS$ continuous time dynamical
  systems]
  The objects of the category $\DS$ of continuous time dynamical
  systems are pairs $(D,X)$ where $D$ is a manifold with corners and
  $X$ is a vector field on $D$.  A morphism from $(D,X)$ to $(D',X')$
  is a map  $f:D\to D'$  of manifolds with corners with
\[
Tf \circ X =X' \circ f.
\]
\end{definition}

\begin{remark} 
  An integral curve  $x:[a,b]\to D$ of a dynamical system $(D, X)$ can be
  thought of a map of dynamical systems as follows. 
Recall that every interval $[a,b]\subset \R$ is equipped with the
constant vector field $\frac{d}{dt}$.   By definition
  the image of the vector field $\frac{d}{dt}$ by the map $x$
  is the derivative $\frac{dx}{dt}$:
\[
\left.\frac{dx}{dt}\right|_t: = Tx \left(\left.\frac{d}{dt}\right|_t\right).
\]
Since $x$ is an integral curve of $X$, $\frac{dx}{dt}|_t = X(x(t))$. Hence,
\[
Tx \circ \frac{d}{dt} = X \circ x.
\]
Thus a map of manifolds with corners $x:[a,b]\to D$ is an integral curve
of a vector field $X$ on $D$ if and only if $x:([a,b],
\frac{d}{dt})\to (D, X)$ is a morphism in the category $\DS$.
\end{remark}
\begin{remark}\label{rmrk:2.10}
  It $x:[a,b] \to D$ is a trajectory of a vector field $X$ then for
  any $b'<b$ the restriction $x|_{[a,b']}$ is also a trajectory of
  $X$.  For this reason we regard maps of
  the form $x:[a,a]\to D$ as integral curves of $X$.  Of course the
  closed interval $[a,a]$ is a single point, so the derivative of $x$
  in this case doesn't quite make sense.  None the less we will find
  this point of view convenient when we deal with executions of hybrid
  systems.
\end{remark}

\subsection*{Relations}

\begin{definition}[Relation] \label{def:rel}
Given two sets $X$ and $Y$ we call a subset $R$ of the product
$Y\times X$ a {\sf relation} and think of it as a ``generalized  map''
{\sf from $X$ to $Y$} (note the order!).  
\end{definition}

\begin{remark}
  The reason from why we think of $R\subset Y\times X$ as a map from
  $X$ to $Y$ and not from $Y$ to $X$ has to do with compositions of
  relations and of functions.  Namely, given two relations $S\subset
  Z\times Y$ and $R\subset Y\times X$ their composition $S\circ R$ is
  defined by
\[
S\circ R :=\{(z,x) \in Z\times X\mid \textrm{ there is } (z,y) \in S,
(y',x) \in R\textrm{ with } y=y'\}.
\]
If $f:X\to Y$ is a function, its {\sf graph} is the relation
\[
\mathrm{graph}(f):= \{(y,x) \in Y\times X\mid y= f(x)\}.
\]
With these definitions, given a function $g:Y\to Z$ we have
\[
\mathrm{graph}(g\circ f) = \mathrm{graph}(g)\circ \mathrm{graph}(f).
\]
\end{remark}
\begin{remark}
Note that if $R\subset Y\times X$ is a relation so that the
intersection $R \cap (Y\times \{x\})$ consists of {\em exactly} one
point for each $x\in X$ then $R$ is a graph of a function from $X$ to
$Y$.  If the intersection $R \cap (Y\times \{x\})$ consists of {\em at
  most} one point for each $x\in X$ then $R$ is a graph of a {\em
  partial function} from $X$ to $Y$ whose domain of definition is the
set $\{x\in X \mid R \cap (Y\times \{x\}) \not = \emptyset\}$.  We
will refer to the image of a relation $R\subset Y\times X$ under the
projection $\pi_X:Y\times X \to X$ as the domain of the relation $R$.
In the hybrid dynamical systems literature domains of relations and/or
partial maps are sometimes referred to as {\sf guards} and relations as
{\sf resets}.
 \end{remark}

\subsection*{Hybrid dynamical systems}
 We next recall the traditional definition of a hybrid dynamical
 system. It is 
 a slight variant of
 \cite[Definition~2.1]{SJLS}).  Note that in \cite{SJLS} what we call
 {\em manifolds with corners/regions} are called {\em domains}. 
 Since in mathematics and computer science
 literature the word ``domain'' has other meanings we prefer to use
 the word ``region.''  Another name for what we call ``regions'' is
 {\em invariants}.  But in mathematics an ``invariant'' has too many
 other meanings (e.g., invariant submanifolds, invariant functions,
 invariant vectors etc.).

\begin{definition}[Hybrid dynamical system]    
A {\sf hybrid dynamical system} (HDS) consists of 
\begin{enumerate}
\item A directed graph $A=\{A_1\toto A_0\}$;
\item For each node $a\in A_0$ a dynamical system  $(R_a, X_a)$ where $X_a$ is a vector field on the manifolds with corners $R_a$;
\item For each arrow $a\xrightarrow{\gamma}b$ of $A$  a {\sf reset } relation
  $R_\gamma \subset R_b \times R_a$.
\end{enumerate}
Thus a hybrid dynamical system is a tuple $(A =\{A_1\toto A_0\},
\{(R_a, X_a)\}_{a\in A_0}, \{R_\gamma \}_{\gamma \in A_1}))$.
\end{definition}

\begin{example}\label{ex:1}
  Here is an example of a very simple hybrid dynamical system.  We
  take $A$ to be the graph with one node $*$ and one arrow
  $*\xrightarrow{\gamma}*$ (formally $A_1=\{\gamma\}$, $A_0=\{*\}$ and
  $\frs(\gamma) = * = \frt(\gamma)$).  We assign to $*$ the constant
  vector field $\frac{d}{dx}$ on the unit interval $[0,1]$.  We take
  $\R_\gamma$ to be the one element set
\[
R_\gamma:= \{(0,1)\}.
\]
By our convention (Definition~\ref{def:rel}) it is the graph of a map
that takes the endpoint $1$ of the closed interval $[0,1]$ to the
endpoint $0$ (and not 0 to 1). Thus
\[
H:= \left\{\left\{
\{\gamma\}\toto \{*\}\right\}, ([0,1], \frac{d}{dx}), \{(0,1)\}
\right\}.
\]
We'll describe the dynamics of the system once we define executions.
\end{example}
\mbox{}\\

\subsection*{Executions.}
Having defined hybrid dynamical systems we now define the
corresponding dynamics.  The notion of an {\sf execution} (that is of
an ``integral curve'' or of a ``hybrid trajectory'') of a hybrid
dynamical system is supposed to captures the following idea.  Given a
hybrid dynamical system
\[
(A, \{(R_a, X_a)\}_{a\in A_0}, \{R_\gamma \}_{\gamma \in A_1}))
\] 
a hybrid trajectory would start at a point in some region $R_{a(1)}$.
For an interval of time $[t_0, t_1]$ it would follow an integral
curve $\sigma_{a(1)}$ of the vector field $X_{a(1)}$ until it reaches
a point $\sigma_{a(1)}(t_1)$ inside the domain of a relation
$R_{\gamma(1)}: R_{a(1)}\to R_{a(2)}$. Now the trajectory
is allowed to jump to a point $y$ in some region $R_{a(2)}$  with $(y,\sigma_{a(1)}(t_1)\in R_{\gamma(1)}$ and follow the
integral curve $\sigma_{a(2)}$ through the point $y$ of the vector field $X_{a(2)}$  for an
interval of time $[t_1, t_2]$. 
And so on for an
increasing sequence of times $\{t_0, t_1, t_2, \ldots..\}$, which may
be finite or infinite. 
This leads to the following definition, which is fairly standard.  We
will revisit the definition: see Definition~\ref{def:gen-execution}
below.

\begin{definition}[An execution with jump times indexed 
by the natural numbers $\N$] \label{def:execution}\mbox{}\\
  Let $H= (A =\{A_1\toto A_0\}, \{(R_a, X_a)\}_{a\in A_0}, \{R_\gamma
  \}_{\gamma \in A_1}))$ be a hybrid dynamical system.  An {\sf
    execution} of $H$ is
\begin{enumerate}
\item an nondecreasing sequence $\{t_i\}_{i\geq 0}$ of real numbers
\item a function $\varphi_0:\N\to A_0$;
\item a function $\varphi_1:\N\to A_1$ compatible with $\varphi_0$: we
  require that $\frs(\varphi_1(i)) = \varphi_0 (i)$ and
  $\frt(\varphi_1 (i)) = \varphi_0 (i+1)$;
\item an integral curve $\sigma_i: [t_{i-1}, t_{i}] \to R_{\varphi_0
    (i)}$ of $X_{\varphi_0(i)}$ 
(with $t_{-1}$ being some number less than $t_0$);
\item the terminal end point of $\sigma_i$ and the initial
  end point of $\sigma_{i+1}$ are related by the reset relation
  $R_{\varphi_1(i)}$: 
\[
(\sigma_{i+1} (t_{i}), \sigma_i (t_{i})) \in R_{\varphi_1(i)}.
\]
\end{enumerate}
\end{definition}
\begin{example}\label{ex:2}
  Consider the hybrid system $H$ of Example~\ref{ex:1}.  What would an
  execution of such a system look like?  We have no choice in defining
  the functions $\varphi_0$ and $\varphi_1$ since $A_0$ and $A_1$ are
  one point sets: we set $\varphi_0(n) = *$ and $\varphi_1(n)= \gamma$
  for all $n\in Z$. If we take $t_i =i$, then $\sigma_i:[i-1,i]\to [0,1]$
  is given by $\sigma_i(t) = t-i+1$. Therefore
\[
 (\{t_i\}_{i\in \N}, \varphi_0, \varphi_1, \{\sigma_i\}_{i\in \N})
\]
is an execution of $H$.
\end{example}

\begin{remark}
  If $t_{i} = t_{i-1}$ the condition in Definition~\ref{def:execution} 
that $\sigma_i: [t_{i-1}, t_{i}] \to
  R_{\varphi_0 (i)}$ is an integral curve of $X_{\varphi_0(i)}$ should be
  interpreted in the sense of Remark~\ref{rmrk:2.10}.  This amounts to
  saying that $\sigma_i (t_i) =\sigma_i (t_{i-1}) $ is a point of
  $R_{\varphi_0 (i)}$.  Note that the next conditions forces
  $(\sigma_{i+1}(t_{i}), \sigma_i (t_i) )\in R_{\varphi_1(i)}$. In other
words if $t_{i-1} = t_i$ the execution jumps.
\end{remark}

\begin{remark}\label{rmrk:2.20}
  More generally jump times of an execution may be indexed by a subset
  $S$ of the integers $\Z$ of the form $S=[n,m]$, $n\leq m$, $n,m\in
  \Z$, or by $S=(-\infty, N]$ by by $S= [N,+\infty)$ for some $N\in
  \Z$. Note that $S= \emptyset$ also makes sense: this is an execution
  that is simply an integral curve of a vector field.  
   We will give a different definition of an execution that
  includes all of these cases, see Definition~\ref{def:gen-execution} below.
\end{remark}

\section{Hybrid phase spaces}
If we forget the vector field of a continuous time dynamical system
$(D,X)$ we get a manifold with corners $D$, which we think of as the
{\em phase space } of our dynamical system.  Therefore it makes sense
to define a {\em hybrid phase space} to be a ``hybrid dynamical system
without the vector fields.''  Formally we record the following
definition, which we think is new.

\begin{definition}[Hybrid phase space] \label{def:hps} A {\sf hybrid
    phase space} consists of
\begin{enumerate}
\item A directed graph $A=\{A_1\toto A_0\}$;
\item For each node $a\in A_0$ a manifold with corners  $R_a$;
\item For each arrow $a\xrightarrow{\gamma}a'$ of $A$ a {\sf reset }
  relation $R_\gamma \subset R_{a'} \times R_a$.
\end{enumerate}
Thus a hybrid phase space is a tuple $(A =\{A_1\toto A_0\},
\{R_a\}_{a\in A_0}, \{R_\gamma \}_{\gamma \in A_1}))$.
\end{definition}

\begin{example}\label{ex:3}
  The underlying hybrid phase space of the hybrid dynamical system of
  Example~\ref{ex:1} consists of  the following data:
\begin{enumerate}
\item the  directed graph $A=\{\{\gamma\}\toto \{*\}\}$;
\item the region $R_* = [0,1]$;
\item the  { reset }
  relation $R_\gamma =\{(0,1)\}  \subset [0,1] \times [0,1]$.
\end{enumerate}
\end{example}

\begin{remark}
  The two collections $\{R_a\}_{a\in A_0}, \{R_\gamma \}_{\gamma \in
    A_1}$ in the definition of a hybrid phase space above look like
  the components of a map of directed graphs and they are.  To make this precise we need a definition.
\end{remark}

\begin{definition}  
  We define the graph $\RRel$ of {\sf regions and relations} as
  follows: the collection of nodes of $\RRel$ is the collection of all
  regions (i.e., manifolds with corners); the collection of all arrows
  of $\RRel$ is the collection of all (set-theoretic) relations
  between the regions.
\end{definition}
\begin{remark}
  The graph $\RRel$ is the underlying graph of a category whose
  objects are manifolds with corners and morphisms are relations
  between the underlying sets of manifolds with corners.
\end{remark}
With this definition and notational convention we can restate the
definition of a hybrid phase space as follows:
\begin{definition}[Hybrid phase space, version 2] \label{def:hps2} 
A {\sf hybrid phase space} is a map of directed graphs
\[
\cR: A\to \RRel.
\]
\end{definition}
\begin{example}\label{ex:3'}
  The underlying hybrid phase space of the hybrid dynamical system of
  Example~\ref{ex:1} is a map of graphs 
\[
\cR:\{\{\gamma\}\toto \{*\}\}\to \RRel 
\]
with $\cR(*) = [0,1]$ and $\cR(\gamma) = \{(0,1)\}$.
\end{example}

The following example will be important when we discuss executions as
maps of hybrid dynamical systems and when we re-define our notion of
an execution.
\begin{example}\label{remark:2.19.1}[Hybrid phase space associated
  with a nondecreasing sequence  $\{t_i\}_{i\in\N}$.]
 Define $\cZ$ to be the graph with the set of edges $\cZ_1:= \N$, the
  set of nodes $\cZ_0 := \N$ and the source and target maps given by
  $\frs (i) = i$, $\frt(i) = i+1$:
\[
0\xrightarrow{0}1 \xrightarrow{1}
\cdots \to i-1\xrightarrow{i-1}i\xrightarrow{i}i+1\xrightarrow{i+1}
i+2 \to\cdots .
\]
Let $\cT: {\cZ} \to \RRel$ be the map of graphs defined on vertices by
\[
\cT (i) =  [t_{i-1}, t_i]
\]
and on arrows by 
\[
\cT (i\xrightarrow{i}i+1) = \cT_i
\]
where $\cT_i :[t_{i-1}, t_i]\to [t_i, t_{i+1}]$ is the relation
consisting of one point $\{(t_i, t_i)\}\subset [t_i, t_{i+1}]\times
[t_{i-1}, t_i]$.  

\end{example}

Our definition of hybrid phase spaces as maps of graphs from arbitrary
graphs to $\RRel$ suggests the category of hybrid phase spaces could
be the slice category $\Graph/\RRel$, but this is too strict.  Note
that $\RRel$ has more
structure: 
in addition to the set-theoretic relations as morphisms we also have
smooth maps as morphisms between regions.  This suggests that we
should think of $\RRel$ as a double category \cite{BMM, Shul,
  Shulman10}.  Recall that double categories have two types of
1-arrows (``vertical'' and ``horizontal'') and, in addition, 2-cells
that are shaped like rectangles.  Composition is defined by pasting
the rectangles--- vertically and horizontally.

\begin{definition}[The double category $\RRel$ of manifolds with
  corners, smooth maps and set-theoretic relations] The double
  category $\RRel$ is defined as follows. Its objects are manifolds
  with corners.  The vertical 1-arrows are smooth maps. The horizontal
  1-arrow are set-theoretic relations. The 2-cells are diagrams of the
  form
\[
\xy
(-19,10)*+{X}="1";
(-19,-10)*+{Y}="2";
(15, 10)*+{X'}="3";
(15, -10)*+{Y'}="4";
{\ar@{->}_{f} "1";"2"};
{\ar@{->}^{R} "1";"3"};
{\ar@{->}_{S} "2";"4"};
{\ar@{->}^{f'} "3";"4"};
{\ar@{=>}^{} (-2,3); (-2,-3)};
\endxy 
\]
where $f,f'$ are smooth maps, $R \subset X'\times X$, $S\subset
Y'\times Y$ are  relations satisfying 
\[
(f',f)(R)\subset S.
\]
\end{definition}

\begin{definition}[A category of hybrid phase spaces $\HS$] \label{def:hps}
  The objects of the category $\HS$ are maps of graphs $\cR :A\to
  \RR$ with target $\RR$.  A morphisms from $\cR:A\to RR$ to $\cQ:B\to \RR$ 
 ``2-commuting'' triangle of the form
\begin{equation}
\xy
(-10, 10)*+{A} ="1"; 
(10, 10)*+{B} ="2";
(0,-2)*+{\RRel }="3";
{\ar@{->}_{\cR} "1";"3"};
{\ar@{->}^{\varphi} "1";"2"};
{\ar@{->}^{\cQ} "2";"3"};
{\ar@{<=}_{\scriptstyle \alpha} (4,6)*{};(-0.4,4)*{}} ; 
\endxy .
\end{equation}
That is, $\varphi:A\to B$ is a map of graphs and $\alpha$ assigns to each node  $a$ of the graph $A$ a  map of manifolds with corners
\[
\alpha_a: \cR(a) \to \cQ(\varphi(a))
\] 
so that for each arrow $a_1\xrightarrow{\gamma} a_2$ in the graph $A$,
we have a 2-cell
\[
\xy
(-20, 10)*+{\cR(a_1)}="1"; 
(-20,-10)*+{\cQ(\varphi(a_1))}="2";
(20, 10)*+{\cR(a_2)}="3"; 
(20,-10)*+{\cQ(\varphi(a_2))}="4";
{\ar@{->}_{ \alpha_{a_1}}"1";"2"};
{\ar@{->}^{\cR(\gamma)} "1";"3"};
{\ar@{->}_{\alpha_{a_2}} "3";"4"};
{\ar@{->}_{\cQ(\varphi(\gamma))} "2";"4"};
{\ar@{=>} (0,2)*{};(0,-2)*{}} ;
\endxy
\]
in $\RRel$.  Note that the latter condition amounts to the inclusion
\begin{equation}\label{eq:3.1}
(\alpha_{a_1}, \alpha_{a_2})(\cR(\gamma))\hookrightarrow \cQ(\varphi(\gamma)).
\end{equation}
The composition of morphisms is given by pasting of triangles:
\[
\xy
(10, 6)*+{B} ="1"; 
(-10, 6)*+{C} ="2";
(0,-6)*+{\RRel }="3";
{\ar@{->}^{\cQ} "1";"3"};
{\ar@{->}_{\psi} "1";"2"};
{\ar@{->}_{\cS} "2";"3"};
{\ar@{=>}_{\scriptstyle \,\beta} (1, 0.8)*{};(-4, 3)*{}} ; 
\endxy 
\quad \circ \quad 
\xy
(10, 6)*+{A} ="1"; 
(-10, 6)*+{B} ="2";
(0,-6)*+{\RRel }="3";
{\ar@{->}^{\cR} "1";"3"};
{\ar@{->}_{\varphi} "1";"2"};
{\ar@{->}_{\cQ} "2";"3"};
{\ar@{=>}_{\scriptstyle  \,\alpha} (1, 0.8)*{};(-4, 3)*{}} ;
\endxy 
\quad = \quad 
\xy
(10, 6)*+{A} ="1"; 
(-10, 6)*+{C} ="2";
(0,-6)*+{\RRel }="3";
{\ar@{->}^{\cR} "1";"3"};
{\ar@{->}_{\psi \circ \varphi} "1";"2"};
{\ar@{->}_{\cQ} "2";"3"};
{\ar@{=>}_{\scriptstyle  \,\delta} (1, 0.8)*{};(-4, 3)*{}} ;
\endxy 
\]
 where
\begin{equation}\label{eq:3.2}
\delta (a) := \beta_{ \varphi(a)} \circ \alpha_a :
\cR(a) \to \cS (\psi(\varphi(a))) 
\end{equation}
for all nodes $a$ of $A$.
\end{definition}

\begin{remark}
  Equation~\eqref{eq:3.2} strongly suggests that we should view
  $\RRel$ as having more structure than just a double category.
  Namely the horizontal category of $\RRel$ should really be view as
  the strict 2-category of regions, set-theoretic relations and
  inclusions of relations.
\end{remark}

\begin{remark}
  We will see in later work that the monoidal structure on the double
  category $\RRel$, which we ignore in this paper, is important in
  building a theory of networks of hybrid systems.
\end{remark}
\begin{example}
  The execution of Example~\ref{ex:2} is a map of hybrid phase spaces.
  This can be seen as follows.  The source hybrid phase space is the
  map $\cT:\cZ\to \RRel$ with $\cT(i) =[i-1,i]$ for all $i\in \N$
  (q.v.\ Example~\ref{remark:2.19.1}).  The target hybrid phase space
  is the phase space $\cR:\{\{\gamma\}\toto \{*\}\}\to \RRel$ of
  Examples~\ref{ex:1}, \ref{ex:3'}.  The desired map of hybrid phase
  spaces consists of the map of graphs 
\[
\varphi: \cZ\to \left\{\{\gamma\}\toto \{*\}\right\}
, \quad \varphi(i-1\xrightarrow{i-1}
i) = *\xrightarrow{\gamma}* \quad \textrm{ for all } i,
\]
and of the collection of smooth maps of closed intervals
\[
\{\sigma_i:\cT(i) = [i-1,i]\to [0,1]\mid \sigma_i (t) = t-i+1\}.
\]
\end{example}

\section{Hybrid dynamical systems}

We are now in position to redefine a hybrid dynamical system as follows.

\begin{definition}[Hybrid dynamical system, version 2] \label{def:hds2} 
A {\sf hybrid dynamical system} is a hybrid phase space
\[
\cR: A\to \RRel.
\]
together with a family of vector field $\{X_a \in \Gamma
(T\cR(a))\}_{a\in A_0}$, one for each region $\cR(a)$.  Thus a hybrid
dynamical system is a pair $(\cR: A\to \RRel,X= \{X_a\in \Gamma (T
\cR(a))\}_{a\in A_0})$.
\end{definition}

\begin{remark}\label{remark:2.19.2}
  Let $\{t_i\}_{i\in \N}$ be a nondecreasing sequence and $\cT: {\cZ}
  \to \RRel$ the corresponding hybrid phase space as in
  Example~\ref{remark:2.19.1}.  On each interval $\cT(i) = [t_{i-1},
  t_i]$ choose the constant vector field $\frac{d}{dt}|_ {[t_{i-1},
    t_i]}$.  Then $(\cT: {\cZ} \to \RRel, \{\frac{d}{dt}|_ {[t_{i-1},
    t_i]}\}_{i\in \N})$ is a
  hybrid dynamical system.
\end{remark}

\begin{definition}[Maps of hybrid dynamical systems]\label{def:mapHDS}
  A {\sf map from a hybrid dynamical system $(\cQ:{A}\to
    \RRel, X)$ to a hybrid dynamical system
    $(\cR:{B}\to \RRel, Y)$} is
\begin{enumerate}
\item a map of hybrid phase spaces $(\varphi, \{\alpha_a\}): \cQ\to \cR$ so that
\item 
\[
Y_{\varphi(a)}\circ \alpha_a  = T\alpha_a \circ X_a
\]
for all nodes $a\in A_0$.
\end{enumerate}
\end{definition}

\begin{remark} \label{remark:2.21}It is not hard to check that the
  composition of two maps of hybrid dynamical systems is a map of
  hybrid dynamical systems and that the composition is associative.
  Hence hybrid dynamical systems form a category, which we denote by
  $\HDS$.
\end{remark}
We are now in position to interpret executions as maps of hybrid
dynamical systems.  This reinterpretation allows us to broaden the
notion of an execution and to give a short proof of the main
result of this part of the paper: maps of hybrid dynamical systems
take executions to executions.

\begin{proposition}
  An execution of a hybrid dynamical system $(\cR: A\to \RRel,X)$ in
  the sense of Definition~\ref{def:execution} is a map $(\varphi,
  \{\sigma_i\}_{i\in\N}:(\cT: {\cZ} \to \RRel, )\to (\cR: A\to \RRel,
  X)$, where $(\cT: {\cZ} \to \RRel, \{\frac{d}{dt}|_ {[t_{i-1},
    t_i]}\}_{i\in \N}$ is a hybrid dynamical system defined in
  Remark~\ref{remark:2.19.2}.
\end{proposition}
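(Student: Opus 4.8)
The plan is simply to unwind both definitions and observe that the defining data and conditions of an execution (Definition~\ref{def:execution}) match, item by item, those of a map of hybrid dynamical systems (Definition~\ref{def:mapHDS}) whose source is the hybrid dynamical system $(\cT:\cZ\to\RRel, \{\frac{d}{dt}\})$ of Remark~\ref{remark:2.19.2}. There is no content beyond careful bookkeeping of indices and of the orientation convention for relations; so the argument is a translation, not a computation. I would organize it as a term-by-term comparison and check that each clause of one definition is \emph{equivalent} to the corresponding clause of the other, so that the assignment is in fact a bijective identification.

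First I would match the combinatorial data. A map of hybrid dynamical systems begins with a map of hybrid phase spaces, hence with a map of graphs $\varphi:\cZ\to A$, which by Definition~\ref{def:map_of_graphs} is a pair $\varphi_0:\cZ_0\to A_0$, $\varphi_1:\cZ_1\to A_1$ satisfying $\varphi_0\circ\frs=\frs\circ\varphi_1$ and $\varphi_0\circ\frt=\frt\circ\varphi_1$. Since $\cZ_0=\cZ_1=\N$ with $\frs(i)=i$ and $\frt(i)=i+1$ (Example~\ref{remark:2.19.1}), these are exactly the functions $\varphi_0,\varphi_1$ of items (2)--(3) of Definition~\ref{def:execution}, and the graph-map compatibility is precisely the requirement $\frs(\varphi_1(i))=\varphi_0(i)$, $\frt(\varphi_1(i))=\varphi_0(i+1)$.

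Next I would identify the continuous data. The 2-morphism $\alpha$ of a map of hybrid phase spaces assigns to each node $i$ a smooth map $\alpha_i:\cT(i)=[t_{i-1},t_i]\to\cR(\varphi(i))=R_{\varphi_0(i)}$; I take $\alpha_i:=\sigma_i$. The vector-field condition (2) of Definition~\ref{def:mapHDS}, namely $X_{\varphi(i)}\circ\sigma_i=T\sigma_i\circ\frac{d}{dt}$, is by the Remark identifying integral curves with morphisms of $\DS$ exactly the statement that $\sigma_i$ is an integral curve of $X_{\varphi_0(i)}$, i.e.\ item (4) of Definition~\ref{def:execution}, with the degenerate case $t_{i-1}=t_i$ handled as in Remark~\ref{rmrk:2.10}.

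Finally, and this is the step needing the most care, I would check the 2-cell condition of a map of hybrid phase spaces against the reset-compatibility (5). For the arrow $i\xrightarrow{i}i+1$ of $\cZ$ the relevant horizontal arrow of the source is $\cT_i=\{(t_i,t_i)\}\subset[t_i,t_{i+1}]\times[t_{i-1},t_i]$, and the inclusion \eqref{eq:3.1} reads $(\alpha_i,\alpha_{i+1})(\cT_i)\subset\cR(\varphi_1(i))=R_{\varphi_1(i)}$. Expanding the image of the one-point relation under this pair of maps --- recalling that the target-node map $\alpha_{i+1}=\sigma_{i+1}$ acts on the target coordinate and the source-node map $\alpha_i=\sigma_i$ on the source coordinate --- gives $(\sigma_{i+1}(t_i),\sigma_i(t_i))\in R_{\varphi_1(i)}$, which is precisely item (5). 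The only real subtlety is keeping straight the orientation convention for relations (the ``note the order!'' of Definition~\ref{def:rel}), so that the first coordinate of the resulting point is $\sigma_{i+1}(t_i)$ and the second is $\sigma_i(t_i)$; once that is pinned down the two conditions coincide. Since each clause is matched by an equivalence, the assignment $(\{t_i\},\varphi_0,\varphi_1,\{\sigma_i\})\mapsto(\varphi,\{\sigma_i\})$ identifies executions of $(\cR:A\to\RRel,X)$ with maps of the stated form, which proves the proposition.
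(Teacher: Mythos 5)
Your proposal is correct and is exactly the argument the paper intends: the paper's proof consists of the single line ``Compare Definitions~\ref{def:execution} and \ref{def:mapHDS},'' and you have simply carried out that comparison clause by clause, including the one point that genuinely needs care (the orientation of the reset relation in the 2-cell condition). No difference in approach, only in the level of detail supplied.
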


\begin{proof}
Compare Definitions~\ref{def:execution} and \ref{def:mapHDS}.
\end{proof}
We now extend the notion of an
execution to allow for indexing of jump times by various subsets of
the integers (q.v.\ Remark~\ref{rmrk:2.20}).  We start by defining an
appropriate generalization of the hybrid dynamical system $(\cT: {\cZ}
\to \RRel, \{\frac{d}{dt}|_ {[t_{i-1},
    t_i]}\}_{i\in \N})$ of Remark~\ref{remark:2.19.2}.

\begin{definition}[hybrid time dynamical system] Let $Z$ be a directed
  tree with countably many vertices and no branching. That is, $Z$ is
  a directed graph such that for {\em any} two distinct nodes $x, y$
  of $Z$ there exists a unique directed path in $Z$ either from $x$ to
  $y$ or from $y$ to $x$ (but not both).

  Let $\cT:{Z} \to \RRel$ be a map of graphs with the
  following properties:
\begin{enumerate}
\item For any node $i$ of $Z$
\[
\cT(i) = [t_i^-, t_i^+]
\]
for some $t_i^-, t_i^+\in \R$ with $t_i^-\leq t_i^+$.

\item For any edge $i\xrightarrow{\gamma}j$ of $Z$ we have $t_i^+=
  t_j ^-$ and
\[
\cT(\gamma) =\{(t_j^-, t_i^+)\}:  [t_i^-, t_i^+]\to  [t_j^-, t_j^+],
\]
is a relation from $\cT(i)$ to $\cT(j)$.  
\end{enumerate}
On each interval $\cT(i)$ choose the constant vector field $X_i =
\frac{d}{dt}$.  We define a  hybrid dynamical system of the form 
\[
(\cT: {\cZ} \to
\RRel, \partial_t:=\{\frac{d}{dt}|_{ \cT(i)  }\}_{i\in \N})
\]
to be a {\sf hybrid time dynamical system}.  We think of such a system
as being analogous to the system $((a,b), \frac{d}{dt})$ in continuous
time dynamics.
\end{definition}

\begin{definition}[An execution of a hybrid dynamical system] 
\label{def:gen-execution}
We define an  {\sf  execution} of a hybrid system
$(\cQ:A \to \RRel, X)$  
to be a map of hybrid dynamical systems
\[
(\varphi, \{\sigma_i\}_{i\in Z_0}):
(\cT:Z\to \RRel, \partial_t)\to  
(\cQ:A\to \RRel, X).
\]
where $(\cT:Z\to \RRel, \partial_t)$ is a hybrid time dynamical system.
\end{definition}
\begin{notation}
  We abbreviate a hybrid dynamical system $(\cQ:A \to \RRel, X)$ as
  $(\cQ, A, X)$.
\end{notation}

We now obtain the following useful theorem:
\begin{theorem}
  \label{thm:2.23} 
Let $(\psi, \{\alpha_a\}): (\cR, A, X)\to (\cQ, B, Y)$ be a map of
hybrid dynamical systems and $(\varphi, \{\sigma_i\}_{i\in Z_0}):
(\cT, \partial_t)\to (\cQ, X)$ be an execution of the first system.
Then the composite morphism
\[
(\psi, \{\alpha_a\})\circ (\varphi, {\sigma_i}): (\cT, \partial_t)
 \to
(\cQ, B. Y)
\]
is an execution of the second system.  In other words morphisms of
hybrid dynamical systems send executions to executions.
\end{theorem}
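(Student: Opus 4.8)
The plan is to exploit the fact that, by Definition~\ref{def:gen-execution}, an execution is nothing more than a morphism in the category $\HDS$ whose source happens to be a hybrid time dynamical system. Once this is recognized, the theorem collapses to the closure of $\HDS$ under composition, which has already been recorded in Remark~\ref{remark:2.21}.

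First I would unwind the hypotheses. By Definition~\ref{def:gen-execution} the execution $(\varphi,\{\sigma_i\}_{i\in Z_0})$ is a morphism of hybrid dynamical systems
\[
(\varphi,\{\sigma_i\}_{i\in Z_0}):(\cT,\partial_t)\longrightarrow(\cR,A,X),
\]
whose source $(\cT,\partial_t)$ is a hybrid time dynamical system. (I read the target $(\cQ,X)$ written in the hypothesis as a slip for the first system $(\cR,A,X)$, since the execution is of the first system.) The map $(\psi,\{\alpha_a\})$ is a morphism $(\cR,A,X)\to(\cQ,B,Y)$ by assumption.

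Next I would simply form the composite in $\HDS$. Because $\HDS$ is a category (Remark~\ref{remark:2.21}), the composite
\[
(\psi,\{\alpha_a\})\circ(\varphi,\{\sigma_i\}):(\cT,\partial_t)\longrightarrow(\cQ,B,Y)
\]
is again a morphism of hybrid dynamical systems. Concretely, its underlying map of graphs is $\psi\circ\varphi$ and its family of smooth maps is $\{\alpha_{\varphi(i)}\circ\sigma_i\}_{i\in Z_0}$, as dictated by the composition law~\eqref{eq:3.2} for maps of hybrid phase spaces. Since the source of this composite is still the hybrid time dynamical system $(\cT,\partial_t)$, Definition~\ref{def:gen-execution} declares the composite to be an execution of $(\cQ,B,Y)$, which is precisely the assertion.

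The main point is that there is no substantive obstacle: the entire content of the theorem has been front-loaded into the verification that $\HDS$ is a category (Remark~\ref{remark:2.21}), namely that the pointwise composite $\alpha_{\varphi(a)}\circ\sigma_a$ of smooth maps still intertwines the relevant vector fields and still satisfies the relation-inclusion condition~\eqref{eq:3.1}. Granting that, the theorem is an immediate consequence of composability, and this is exactly the payoff of defining executions as a distinguished class of morphisms rather than as separate combinatorial data.
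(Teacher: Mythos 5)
Your proof is correct and follows exactly the paper's own argument: the paper's proof is the one-line observation that, by Definition~\ref{def:gen-execution}, the composite is a morphism out of a hybrid time dynamical system and hence an execution, with the real work having been deferred to the verification (Remark~\ref{remark:2.21}) that $\HDS$ is closed under composition. Your additional unwinding of the composite's components and your reading of $(\cQ,X)$ as a typo for $(\cR,A,X)$ are both accurate.
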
 
\begin{proof}
  By Definition~\ref{def:gen-execution} the composite morphism $(\psi,
  \{\alpha_a\})\circ (\varphi, {\sigma_i})$ is an execution of the
  system $(\cQ, B, Y)$.
\end{proof}

\begin{example}\label{ex:42}
  Let $A$ be the graph with one node and one arrow: $A=\{ \{\alpha,
  \}\toto \{*\}\}$.  Let $\cQ:A\to \RRel$ be the map of graphs
  with
\begin{equation*}
\cQ (*)=[0,1]^2 \quad \textrm{ and }\quad
\cQ (\alpha)=  \{((0,0),(1,1))\} .
\end{equation*} 
We think of the relation  $\cQ(\alpha)$ as a partial map sending $(1,1)$ to $(0,0)$.
Let $Y:[0,1]^2 \to \R^2 $ be a vector field of the form 
\[
Y(x,y) = (f(x,y), f(y,x))
\]
for some smooth function 
\[
f:[0,1]^2 \to \R.
\]  
Let
$\cR:\{\{\gamma\}\toto \{*\}\}\to \RRel$ be the hybrid phase space of
Example~\ref{ex:3'} and  $X:[0,1]\to \R$ be the vector field of the form
\[
X(x) = f(x,x),
\]
for the {\sf same} function $f$.  The map 
\[
\psi: \{\{\gamma\}\toto \{*\}\}\to A, \quad \varphi(*\xrightarrow{\gamma}*)
= *\xrightarrow{\alpha}* \, \in A
\]
is a map of graphs.  Define $\tau_*: \cR(*) = [0,1]\to \cQ(*) $ by
\[
\tau_* (x) = (x,x).
\] 
It is easy to see that 
\begin{equation}\label{eq:diag}
(\psi, \{\tau_*\}): (\cR, X)  \to (\cQ, Y) 
\end{equation}
is a map of hybrid dynamical systems.  It is also not hard to check
that for any execution $(\varphi, \{\sigma_i\}_{i\in Z_0}):(\cT: {\cZ}
\to \RRel, \partial_t)\to (\cR, X)$ the composite $(\psi, \tau_*)\circ
(\varphi, \{\sigma_i\}_{i\in Z_0})$ is an execution of $(\cQ, Y)$.
\end{example}

\begin{remark}
Both hybrid dynamical systems in Example~\ref{ex:42} are built out of
one open hybrid system.  The larger system $(\cQ, Y)$ is built by
interconnecting two copies of an open system.  The smaller system
$(\cR, X)$ is built by interconnecting inputs and outputs of the same
open system.  The existence of the map $(\psi, \{\tau_*\})$ can be
seen as being induced by a map of finite sets.  It is the one map from
the two element set $\{1,2\}$ to the one element set $\{0\}$.
We plan to address open hybrid systems, their interconnections and, more generally, networks of hybrid systems in future work.
\end{remark}

\section{Discussion} In this brief paper we introduced a new version
of the notion of a hybrid dynamical system.  We made it more compact.
We also introduced the notion of a morphism (``map'') of hybrid
dynamical systems.  This allowed us to turn hybrid dynamical systems
into a category.  Our notion of morphism also allowed us to view
executions as a particular kind of morphisms. Consequently morphisms
of hybrid dynamical systems take executions to executions even if the
hybrid systems in question are not deterministic!

In the present paper we viewed hybrid systems as generalizations of
continuous time dynamical systems.  There is another approach that
views hybrid systems as generalizations of automata and of labeled
transition systems (see, for example \cite{KLSV}).  One important
aspect of labeled transition systems is that of parallel composition
that allows one to synchronize parallel transitions by way of label
sharing.  Since our category of hybrid dynamical systems is built on
directed graphs, we fail to properly account for parallel composition
of hybrid systems.  However there is a  fix to this problem:
replace directed graphs in Definition~\ref{def:hps} by labeled
transition systems.  We plan to address this elsewhere.

\end{document}